\theoremstyle{plain}
\newtheorem{thm}[subsection]{Theorem}
\newtheorem{defn}[subsection]{Definition}
\newtheorem{lem}[subsection]{Lemma}
\newtheorem{cor}[subsection]{Corollary}
\theoremstyle{definition}
\newtheorem{remark}[subsection]{Remark}
\newtheorem{example}[subsection]{Example}
\numberwithin{equation}{section}
\newcommand{\DS}{\displaystyle }
\newcommand{\A}{{\mathcal A}}
\newcommand{\La}{{\mathcal L}}
\newcommand{\Q}{{\mathbb Q}}
\newcommand{\C}{{\mathbb C}}
\newcommand{\G}{\mathsf{G}}
\renewcommand{\P}{{\mathbb P}}
\renewcommand{\L}{{\mathbb L}}
\newcommand{\h}{\mathfrak{h}}
\begin{document}

\title[Holonomy Lie algebras and the LCS formula]%
{Holonomy Lie algebras and the LCS formula \\for subarrangements of $A_n$}

\author[Paulo Lima-Filho]{Paulo Lima-Filho}
\address{Department of Mathematics,
Texas A\&M University, College Station, TX 77843}
\email{\href{mailto:plfilho@math.tamu.edu}{plfilho@math.tamu.edu}}
\urladdr{\href{http://www.math.tamu.edu/~paulo.lima-filho/}%
{http://www.math.tamu.edu/\~{}paulo.lima-filho}}

\author[Hal Schenck]{Hal Schenck$^1$}
\address{Department of Mathematics,
University of Illinois, Urbana, IL 61801}
\email{\href{mailto:schenck@math.uiuc.edu}{schenck@math.uiuc.edu}}
\urladdr{\href{http://www.math.uiuc.edu/~schenck/}%
{http://www.math.uiuc.edu/\~{}schenck}}

\thanks{$^1$Partially supported by NSF DMS 07-07667, NSA MDA
  904-03-1-0006.}

\subjclass[2000]{Primary
52C35;  %% Arrangements of points, flats, hyperplanes
20F14,  %% Derived series, central series, and generalizations
20F40,  %% Associated Lie structures
}

\keywords{holonomy Lie algebra, lower central series, hyperplane
  arrangement}

\begin{abstract}
If $X$ is the complement of a hypersurface in $\P^n$, then Kohno
showed in \cite{K83} that the nilpotent completion of $\pi_1(X)$ is isomorphic
to the nilpotent completion of the holonomy Lie algebra of $X$.
When $X$ is the complement of a hyperplane arrangement $\A$, the
ranks $\phi_k$ of the lower central series quotients of $\pi_1(X)$
are known for isolated examples, and for two special classes: 
arrangements for which $X$ is hypersolvable
(in which case the quadratic closure of the cohomology ring is Koszul), or if 
the holonomy Lie algebra decomposes in degree $3$ as a 
direct product of local components.
In this paper, we use the holonomy Lie algebra to obtain a formula
for $\phi_k$ when $\A$ is a subarrangement of $A_n$. This extends
Kohno's result \cite{K85} for braid arrangements, and provides
an instance of an LCS formula for arrangements which are not
decomposable or hypersolvable.
\end{abstract}

\maketitle

%%%%%%%%%%%%%%%%%%%%%%%%%%%%%%%%%%%%%%%%%%%%%%%

\section{Introduction}
\label{sec:intro}
Let $X$ be the complement of an arrangement of projective hypersurfaces.
Associated to the fundamental group $\pi_1(X)$ of $X$ is the
{\em lower central series}: a chain of normal subgroups
\[
\pi_1(X)=G_1 \ge G_2 \ge G_3\ge\cdots,
\]
defined inductively by $G_k = [G_{k-1},G_1]$. The graded vector
space $(\oplus G_i/G_{i+1})\otimes \Q$ becomes a graded Lie
algebra over $\Q$ with bracket given by $[x,y]=xyx^{-1}y^{-1}$. In
\cite{K83}, Kohno combined results of Deligne \cite{De} and Morgan \cite{M} on
the bigrading on the mixed Hodge structure
of $H^1(X,\Q)$ with results of Sullivan \cite{S} on the minimal
model to show that $(\oplus G_i/G_{i+1})\otimes \Q$ is 
isomorphic to the nilpotent completion of the holonomy Lie algebra of $X$.

If $X$ is the complement of an arrangement of hyperplanes $\A$,
the holonomy Lie algebra $\h_X$ is a purely combinatorial object,
defined in terms of the rank two flats in the intersection lattice
$\La(\A)$. Elements of rank $i$ in $\La(\A)$ correspond to maximal
sets of hyperplanes $\{H_1, \ldots, H_k \}$ intersecting in
codimension exactly $i$. Slicing an arrangement $\A$ with a
generic plane yields an arrangement $\A' \subseteq \P^2$ with
$\La(\A) \simeq \La(\A')$ in rank $\le 2$. In a similar vein, a
theorem of Hamm-L\^{e} \cite{HL} shows that the fundamental group of $X$ is
unaffected by the process of slicing down generically to an arrangement in
$\P^2$, which shows that the algebra $(\oplus G_i/G_{i+1})\otimes \Q$ 
depends only on a generic planar restriction of $X$. 
Comparing these two reductions suggests the possibility of a 
result like that proved by Kohno. An immediate consequence of
the isomorphism $\h_X \simeq (\oplus G_i/G_{i+1})\otimes \Q$ is
that $\phi_k$ is  the dimension of the $k^{th}$ graded piece
of $\h_X$, where $\h_X$ is graded by bracket depth. 

In \cite{K85}, Kohno used this framework to prove a beautiful formula
for the LCS ranks of the braid arrangement $A_n$:
\begin{equation}
\label{lcsformula}
\prod_{k=1}^{\infty}(1-t^k)^{\phi_k}=P(X,-t) = \prod_{i=1}^{n-1}(1-it),
\end{equation}
where $P(X,t)=\sum b_it^i $ is the Poincar\'e polynomial of the
complement of $X$ in $\C^n$.
\begin{defn}
Let $G$ be a (simple) graph on $\ell$ vertices, with edge-set $\mathsf{E}$,
and let $\A_{G}=\{z_i-z_j=0\mid (i,j)\in \mathsf{E} \}$
be the corresponding arrangement in $\C^{\ell}$; $\A_{G}$ is called
a graphic arrangement. Write $U_{G}(t)$ for $\prod_{k=1}^{\infty}
\left(1-t^k\right)^{\phi_k(\pi_1(\C^{\ell}  \setminus \A_{G}))}$.
\end{defn}
The braid arrangement $A_n$ is the arrangement associated to the
graph $K_n$. Our main result is an extension (conjectured in
\cite{SS}) of Kohno's theorem:
\begin{thm}[Graphic LCS formula]
\label{thm:gLCS}
If $G$ is a graph with $\ell$ vertices, and $\kappa_s$ denotes the
number of complete subgraphs of $G$ on $s+1$ vertices, then:
\begin{equation}
\label{eq:GLCS}
U_{G}(t)=
\prod_{j=1}^{\ell-1} \left(1-jt\right)^{\DS{\sum_{s=j}^{\ell-1}(-1)^{s-j}
\tbinom{s}{j} \kappa_{s}
}}
\end{equation}
\end{thm}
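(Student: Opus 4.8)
The plan is to reduce the formula \eqref{eq:GLCS} to a Hilbert-series computation for the enveloping algebra $U(\h_G)$ of the holonomy Lie algebra, then to carry that computation out by splicing Kohno's formula \eqref{lcsformula} together over the complete subgraphs of $G$, and finally to check that the bookkeeping produces exactly the stated exponents. For the reduction: by Kohno's isomorphism and the discussion in Section~\ref{sec:intro}, $\phi_k=\dim_{\Q}(\h_G)_k$ for the bracket-length grading, so the Poincar\'e--Birkhoff--Witt theorem gives $\Hilb(U(\h_G),t)=\prod_{k\ge1}(1-t^k)^{-\phi_k}=U_G(t)^{-1}$, and it suffices to compute $\Hilb(U(\h_G),t)$. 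From the rank-two flats of $\La(\A_G)$ one reads off the presentation of $\h_G$: generators $x_e$ indexed by the edges $e\in\mathsf{E}$; relations $[x_e,x_f]=0$ whenever $e$ and $f$ do not both lie in a common triangle (all disjoint pairs, and all adjacent pairs that do not complete a triangle); and, for each triangle $T=\{i,j,k\}$, the relations $[x_{ij},x_{jk}]=[x_{jk},x_{ik}]=[x_{ik},x_{ij}]$. Equivalently, if $G$ has $\ell$ vertices, $U(\h_G)=U(\h_{K_\ell})/(x_{uv}:uv\notin\mathsf{E})$, since deleting the ``missing'' generators turns the braid relations of $K_\ell$ into precisely these relations. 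In particular $\dim(\h_G)_2=\kappa_2$ (the degree-two content of \eqref{eq:GLCS}), and Kohno's computation $\Hilb(U(\h_{K_\ell}),t)=\prod_{i=1}^{\ell-1}(1-it)^{-1}$ furnishes the building block.

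The computation has two parts. When $G$ has a \emph{clique cutset}, say $G=G_1\cup_C G_2$ with $G_1\cap G_2=C$ a complete subgraph whose removal disconnects $G$, no relation of $\h_G$ ties an edge of $G_1\setminus C$ to an edge of $G_2\setminus C$ (they are disjoint, or adjacent through a vertex of $C$ but with no edge completing a triangle), and one obtains the multiplicative law $U_G(t)=U_{G_1}(t)\,U_{G_2}(t)\,U_C(t)^{-1}$ --- the holonomy analogue of the parallel-connection formula for Poincar\'e polynomials. Since every clique of $G$ lies in $G_1$ or in $G_2$, overlapping exactly in the subsets of $C$, one has $\kappa_s(G)=\kappa_s(G_1)+\kappa_s(G_2)-\kappa_s(C)$, hence the same additive relation for the exponents in \eqref{eq:GLCS}; so an induction on $|\mathsf{E}|$ reduces everything to graphs with no clique cutset. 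Among those, complete graphs are exactly \eqref{lcsformula}, and any non-complete graph with no clique cutset is necessarily non-chordal (minimal separators in chordal graphs are cliques), the octahedron $K_{2,2,2}$ being the first instance. For such atomic, non-supersolvable $G$ I would compute $\Hilb(U(\h_G),t)$ by hand --- either by producing an explicit Gr\"obner basis for the ideal defining $U(\h_{K_\ell})/(x_{uv}:uv\notin\mathsf{E})$ with respect to a vertex-adapted edge order, whose leading monomials are governed by the triangles and higher cliques of $G$, or by computing $\Tor^{U(\h_G)}_*(\Q,\Q)=H_*(\h_G;\Q)$ and reading off $U_G(t)=\sum_{i,j}(-1)^i\dim H_i(\h_G)_j\,t^j$ from its Euler characteristic, again with the bigraded ranks controlled by the clique complex. \textbf{This is the crux and the main obstacle.} The difficulty is essential: for non-chordal $G$ neither $U(\h_G)$ nor the quadratic closure $\overline{A}(\A_G)$ is Koszul --- Koszul duality would force $\Hilb(\overline{A}(\A_{K_{2,2,2}}),t)=(1+2t)^8/(1+t)^4$, impossible for a quotient of a finite exterior algebra --- so one cannot simply dualize against the Orlik--Solomon ring, and the Gr\"obner basis (equivalently, the minimal free resolution) is infinite and non-quadratic; extracting the Hilbert series from it combinatorially is the heart of the argument.

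It then remains to check that splicing the local contributions over cliques yields the exponents of \eqref{eq:GLCS}. Writing $c_j=\sum_{s\ge j}(-1)^{s-j}\binom{s}{j}\kappa_s$, one has the generating-function reformulation $\sum_{j\ge0}c_jy^j=\sum_{\text{cliques }C\subseteq G}(y-1)^{|C|-1}$, so assembling one factor $\prod_{i=1}^{|C|-1}(1-it)$ per clique, with signs dictated by M\"obius inversion on the Boolean lattice, reproduces exactly $\prod_{j=1}^{\ell-1}(1-jt)^{c_j}$; this is a Vandermonde-type binomial identity --- the same one that gives $c_j\equiv1$ when $G=K_n$, so that \eqref{lcsformula} is recovered as a special case. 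Combining the three steps establishes \eqref{eq:GLCS}.
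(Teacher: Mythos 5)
Your reduction via Poincar\'e--Birkhoff--Witt and your final binomial bookkeeping (the identity $\sum_j c_j y^j=\sum_{s}\kappa_s(y-1)^s$, which gives $c_j\equiv 1$ for $K_n$ and the additivity of the exponents under gluing) are both correct and agree with what the paper does. The gap is exactly where you flag it. Your decomposition splits $G$ only along \emph{clique} cutsets, so the induction bottoms out not at complete graphs but at the class of graphs admitting no clique cutset ($K_{2,2,2}$, chordless cycles, and infinitely many others), and for those you offer only a declaration that you ``would compute $\Hilb(U(\h_G),t)$ by hand'' via a Gr\"obner basis or a $\Tor$ computation. That is not a proof; as you yourself observe, these atoms are precisely the non-supersolvable, non-Koszul cases where no duality or finite quadratic Gr\"obner basis is available, and they form an infinite family, so no finite list of hand computations can close the induction.

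The paper's way around this is to weaken the hypothesis on the gluing locus rather than to attack the atoms directly: the multiplicative law $U_G=U_{G_1}U_{G_2}/U_K$ is established (Theorem~\ref{thm:ses} and Corollary~\ref{cor:poly}) whenever the pairs $(G_i,K)$ and $(G,G_i)$ are \emph{triangle-complete}, a condition that does not require $K$ to be a clique. This permits the vertex-deletion decomposition of Section~\ref{sec:graphic}: $G_1$ is the subgraph of edges missing a chosen vertex $v_n$, $G_2$ is the closed neighborhood of $v_n$, and $K$ is the (generally incomplete) graph on the edges joining common neighbors of $v_n$; Lemma~\ref{lem:induct} verifies the TC conditions, and since $G\not\simeq K_n$ one can choose $v_n$ so that both pieces have strictly fewer vertices. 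The induction on $v(G)$ then terminates at complete graphs, i.e.\ at Kohno's formula, with no non-chordal atoms left over. To rescue your approach you would need to replace ``clique cutset'' by such a weaker gluing condition and prove the Mayer--Vietoris sequence for holonomy Lie algebras at that level of generality; that is the real content of the paper, namely Lemma~\ref{lem:glue} on rewriting brackets with entries in $G\setminus K$, the identification $I_G\cap\L(K)=I_K$, and the resulting pull-back and push-out diagrams.
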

\begin{example}
Suppose $G$ consists of the $1$-skeleton of the Egyptian pyramid and
the one skeleton of a tetrahedron, sharing a single triangle, as
below:
\vskip .15in
\begin{center}
\epsfig{file=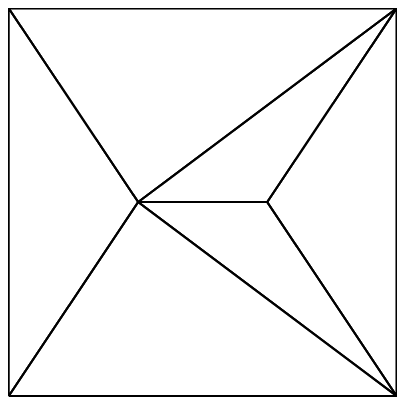,height=1.0in,width=2.0in}
\end{center}
\vskip .15in For this graph, 
$U_{G}(t) =(1-2t)^4(1-3t)$. The proof of $(1.2)$ 
uses a Mayer-Vietoris argument: a graph is determined by the 
data of the maximal complete subgraphs, together with gluing data. 
For example, if $G'$ denotes the $1$-skeleton of the  Egyptian pyramid, 
then four triangles are glued along four edges, and 
Corollary~\ref{cor:poly} shows that 
$U_{G'}(t) =((1-t)(1-2t))^4/(1-t)^4 =(1-2t)^4$. For the $K_4$ subgraph, 
$U_{K_4}(t) =(1-t)(1-2t)(1-3t)$. The graphs $G'$ and $K_4$ are glued 
along a common triangle, hence
\[
U_G(t) = \frac{(1-2t)^4 \cdot (1-t)(1-2t)(1-3t)}{(1-t)(1-2t)} .
\]
\end{example}
\subsection{Other LCS formulas}
To place (\ref{eq:GLCS}) in context, we note that Kohno's formula has
previously been extended in two different directions. In
\cite{FR85}, Falk and Randell give a generalization to fiber-type
arrangements, i.e. those whose complement can be factored as a
chain of {\em linear} fibrations with fiber $F_i \simeq \C -\{
p_1,\ldots, p_{i_k} \}$. As shown by Terao in \cite{T}, this is equivalent
to a lattice-theoretic condition known as {\em supersolvability}.
Supersolvability implies that $H^{*}(X)$ admits a quadratic
Gr\"obner basis, which implies that $H^{*}(X)$ is Koszul; in
\cite{SY}, Shelton and Yuzvinsky interpret the LCS formula
\eqref{lcsformula} in terms of Koszul duality. The requirement
that $H^{*}(X)$ be Koszul can be relaxed a bit, as shown in
\cite{JP}, where Jambu and Papadima provide a version of the LCS
formula for hypersolvable arrangements. An arrangement ${\mathcal A}$ is
hypersolvable iff there is a supersolvable arrangement ${\mathcal B}$ and
linear subspace $W$ with ${\mathcal A} = {\mathcal B} \cap W$ preserving the
rank two flats (see \cite{DS}). Since an arrangement 
in $\P^2$ is supersolvable iff there exists $x \in
\La_2(\A)$ such that for each $y \in \La_2(\A)$ there is a line of
$\A$ connecting $x$ and $y$, it is clear that supersolvable
arrangements have very constrained geometry.

The second generalization lies at the opposite end of the
spectrum. Following Papadima and Suciu \cite{PS}, define
an arrangement to be {\em decomposable} if the
holonomy Lie algebra behaves in degree~$3$ like a direct
product of holonomy Lie algebras of pencil arrangements. 
Arrangements of this type are as simple as
possible in many respects; as shown by Cohen-Suciu \cite{CS}, 
the linearized Alexander invariant is a direct sum of
submodules corresponding to Alexander invariants of rank two
subarrangements, with scalars suitably extended. 
%and the cohomology jump loci--those points
%$(a_1:\dots :a_n) \subseteq Proj(H^1(X))$ such that
%\[
%H^0(X,\Q)) \xrightarrow{\ \wedge \sum a_ie_i\ }
%%\stackrel{\wedge \sum a_ie_i}{\longrightarrow}
%H^1(X,\Q)
%%\stackrel{\wedge \sum a_ie_i}{\longrightarrow}
%\xrightarrow{\ \wedge \sum a_ie_i\ } H^2(X,\Q))
%\]
%is not exact--are easy to describe.
While the underlying arrangement $\A$ is not in general
a product, the LCS ranks behave as if it were. Papadima-Suciu
prove that for decomposable arrangements, 
\begin{equation}
\label{eq:decomp}
U_{G}(t)=(1-t)^{b_1} \prod_{p\in L_2(\A)} \frac{1- \mu(p)\, t}{1-t}.
\end{equation}
Here $\mu$ denotes the M\"obius function: for $p\in L_2(\A)$, 
$\mu(p)$ is one less than the number of hyperplanes of $\A$ which
contain $p$. 
\begin{remark}
Theorem 6.4 of \cite{SS} shows that a graphic arrangement 
has a cohomology ring which is  Koszul iff the graph is supersolvable,
and in \cite{PS2}, Papadima and Suciu give necessary and sufficient
conditions for a graphic arrangement to be hypersolvable.
Results of Stanley \cite{stan} and Terao \cite{T} show 
that $\A_G$ is supersolvable iff $G$ is chordal.
% (for extensions, see \cite{KS}). 
As noted in Remark 6.18 of \cite{SS}, the chromatic 
polynomial of a chordal graph is given by
\begin{equation}
\label{eq:chordchi}
\chi_{\G}(t)=t^{\kappa_0} \prod_{j=1}^{\kappa_0-1}
\left(1-jt^{-1}\right)^{\sum_{s=j}^{\kappa_0-1}(-1)^{s-j}
\tbinom{s}{j} \kappa_{s}},
\end{equation}
and in this case, $(1.2)$ specializes to the
Falk-Randell LCS formula. At the opposite end of the spectrum 
are the decomposable graphic arrangements, 
classified in \cite{PS}: they are exactly 
those with $\kappa_i = 0$ for all $i\ge 3$. For these arrangements, 
(\ref{eq:GLCS}) specializes to (\ref{eq:decomp}). Equations \ref{eq:GLCS}
and \ref{eq:decomp} are instances of the Resonance LCS conjecture of
\cite{Su}. 
\end{remark}
The point of this paper is that for certain special classes of
arrangements, it is possible to find a formula which encompasses
both the decomposable and supersolvable classes, and indeed, everything in 
between. The main tool involved in the proof is a careful analysis 
of the holonomy Lie algebra, which we quickly review. 
It is worth noting that in \cite{P}, Peeva shows that for 
the $D_3$ arrangement, the LCS ranks cannot be obtained as $P(N,-t)$ 
for any standard graded algebra $N$, and in \cite{R}, Roos 
shows that in fact there are arrangements for which 
\[
\prod_{i=1}^\infty (1-t^i)^{\phi_i}
\]
is a transcendental function.

\subsection{The holonomy Lie algebra and formal spaces}
Let $X$ be the complement of a complex hypersurface in $\P^n$. Cup
product gives a map 
\[
H^1(X,\Q) \wedge H^1(X,\Q) \longrightarrow
H^2(X,\Q); 
\]
the dual of this map is the comultiplication map
\[
H_2(X,\Q) \xrightarrow{\cup^t} H_1(X,\Q)\wedge H_1(X,\Q)
\longrightarrow \L(H_1(X,\Q)),
\]
where $\L(H_1(X,\Q))$ is the free Lie algebra on $H_1(X,\Q).$
Following Chen \cite{C}, define the holonomy Lie algebra of $X$ as
the quotient $\L(H_1(X,\Q))/I_X$, where $I_X$ is the ideal
generated by the image of the comultiplication map. 
In the case of arrangements, Kohno shows that the image of 
the comultiplication map is generated by all brackets of the form
\[
[x_j, \sum_{i=1}^k x_i],
\]
where $x_i$ is the generator of $\L(H_1(X,\Q))$ corresponding to
the $i$-th hyperplane $e_i$ and $e_j \in \{e_1,\ldots, e_k \}$ is a
(maximal) set of hyperplanes intersecting in codimension two;
$\{e_1,\ldots, e_k \}$ corresponds to an element of $\La_2(\A)$.
This is also an immediate consequence 
of the description of the cohomology ring of an arrangement
complement given by Orlik-Solomon in \cite{OS}.

The proof of Kohno's main result runs as follows. In \cite{S},
Sullivan introduced the minimal model of a differential
graded algebra (DGA) and associated notion of {\em formality}.
A space $X$ is formal if the minimal model of the DGA of
$\Q$-polynomial forms on $X$ is formal. Sullivan shows that
for a formal space, $(\oplus G_i/G_{i+1})\otimes \Q$ is
isomorphic to the associated graded Lie algebra $\mathcal{L}$
of a filtration $\mathcal{L}_i$ arising in the construction
of the $1$-minimal model.

If $\A$ is defined by the vanishing of $\{\alpha_1,\ldots,\alpha_n\}$, let 
$\mathcal{R} = \C[\frac{d \alpha_1}{\alpha_1}, \ldots,\frac{d \alpha_n}{\alpha_n}]$. 
Brieskorn \cite{Br} shows that the map $\mathcal{R} \rightarrow H^*(X)$ 
sending $\frac{d \alpha_i}{\alpha_i}$ to its class
is an isomorphism, so an arrangement complement 
$X$ is formal and Sullivan's results apply. 
Using work of \cite{De} and \cite{M}, Kohno analyzes
the bigrading on the Hirsch extensions appearing in
the construction of the $1$-minimal model and proves
that there is an isomorphism from the holonomy Lie algebra
to  $\mathcal{L}$. It follows from Poincar\'e-Birkhoff-Witt 
that $\prod_{k=1}^{\infty} (1-t^k)^{-\phi_k}$
is the Hilbert series of the universal enveloping
algebra of the holonomy Lie algebra.

\section{Gluing $\h_X$ from subgraphs}

In this section, we prove an amalgamation lemma, which will allow
us to relate the holonomy Lie algebras of two graphic
arrangements, identified along a common subgraph of a special
type.

We denote by $I_G$ the ideal in $\L(H_1(X,\Q))$ describing the
holonomy Lie algebra for the graphic arrangement $\A_{G}$. In this
case,  the  brackets generating the ideal are as follows: either
they have the form $[x_1,x_2]$ for edges $e_1, e_2$ not contained
in a triangle of $G$,  or they have the form $[x_i,x_1+x_2+x_3],\
i = 1,2,3$,\ if $\ \{ e_1, e_2, e_3 \}\ $  is a triangle in $G$.
Note that in the latter case it suffices to use the brackets
$[x_1,x_2+x_3]$ and $[x_2,x_1+x_3].$

To simplify notation, we write $\h_G$ for $\h_{\A_G}$ and $\L(G)$
for the free Lie algebra on the \emph{edges} of $G$; edges of a
graph will be denoted by $e_j$, with $x_j$ the corresponding
generator in $\L(G)$ or $\h_G$. Let $\Delta_G$ denote set of
triangles in a graph $G$.
\begin{defn}
\label{defn:star}
For a subgraph $K$ of $G$, the pair $(G,K)$ is triangle-complete
(TC) if whenever $\{ e_{i_1},e_{i_2},e_{i_3} \} \in \Delta_G$
and $\{ e_{j_1},e_{j_2} \} \subseteq K$ with $\{ e_{j_1},e_{j_2} \}
\subseteq \{ e_{i_1},e_{i_2},e_{i_3} \}$, then $\{
e_{i_1},e_{i_2},e_{i_3} \} \subseteq K$. So if two edges of a 
triangle of $G$ lie in $K$, the third edge is also in $K$.
\end{defn}

\begin{lem}
\label{lem:glue}
Let  $b = [x_n[x_{n-1},[x_{n-2},[\ldots [x_1,x_0]]]\ldots] \in
  \h_G$ be a bracket of depth $n$, and suppose that $e_i \in {G \setminus K}$ for some $i \in \{0,\ldots,n\}$.
If $(G,K)$ is TC, then $b$ can be written as a sum of brackets of
depth $n$
\[
b= \sum\ [x_{j_n},[x_{j_{n-1}},[x_{j_{n-2}},[\ldots
[x_{j_1},x_{j_0}]]]\ldots]
\]
with all the $e_{j_k}$'s in ${G \setminus K}$.
\end{lem}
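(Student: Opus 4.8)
The plan is to induct on the depth $n$, using the defining relations of $\h_G$ to rewrite any generator $x_i$ with $e_i \in K$ (appearing somewhere in the bracket $b$) in terms of generators indexed by edges of $G\setminus K$, modulo lower-depth-type manipulations that stay within depth $n$. The crucial observation is that if a generator $x_i$ with $e_i\in K$ shares a triangle $\{e_i, e_a, e_b\}$ of $G$ with another edge, then the TC hypothesis forces a dichotomy: either \emph{both} $e_a$ and $e_b$ lie in $K$ (so the triangle is entirely inside $K$, giving no help), or at least one of $e_a, e_b$ lies in $G\setminus K$. One first reduces to the case where $e_i\in K$ occupies the innermost position, i.e. $b = [x_n,[\ldots,[x_1, x_0]]\ldots]$ with, say, $e_0\in K$; this is legitimate because the relation $[x_p,[x_q,-]] = [x_q,[x_p,-]] + [[x_p,x_q],-]$ (Jacobi) lets us shuffle a $K$-indexed generator inward, and the correction term $[[x_p,x_q],-]$ either already has a $G\setminus K$ innermost generator or can be handled by the inductive hypothesis applied to the depth-$n$ bracket $[[x_p,x_q],x_{n-2},\ldots]$.

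So assume $e_0\in K$, and pick the edge $e_1$ with corresponding generator $x_1$ bracketed against $x_0$. Here I split into cases according to whether $e_1$ lies in $K$ or in $G\setminus K$. The key step is to produce, inside $\h_G$, a relation expressing $[x_1, x_0]$ as a combination of brackets $[x_{j_1},x_{j_0}]$ with $e_{j_0}, e_{j_1}$ edges that are either in $G\setminus K$ or "closer" to $G\setminus K$ in a suitable well-founded sense. If $\{e_0, e_1\}$ lies in no triangle of $G$, then $[x_1,x_0]=0$ in $\h_G$ and $b=0$, done. If $\{e_0,e_1,e_c\}$ is a triangle, the Orlik–Solomon relation gives $[x_1, x_0] = -[x_c, x_0]$ (up to the relations $[x_0,x_1+x_c]=0$, $[x_1,x_0+x_c]=0$ in $\h_G$), trading $e_1$ for $e_c$. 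By the TC property, since $e_0\in K$, if $e_1\in K$ too then $e_c\in K$ as well — so this trade alone does not immediately move us out of $K$; one must instead use the triangle to move the \emph{pair} $\{e_0,e_1\}$ and carefully track a monovariant. The natural monovariant is something like the number of $K$-edges appearing in the bracket, together with their total "distance" from $G\setminus K$; I expect the bracket $b$ to vanish outright in the genuinely stuck cases (the whole bracket built from a clique sitting inside $K$, with no exit), which is exactly where the hypothesis $e_i\in G\setminus K$ for \emph{some} $i$ is used to guarantee we are never fully stuck.

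The main obstacle will be making the "move the pair inward/outward" bookkeeping precise: each application of Jacobi or of an Orlik–Solomon triangle relation spawns correction terms, and one needs a single well-founded quantity (on the multiset of edge-labels of a depth-$n$ bracket) that strictly decreases under the rewriting moves while the depth is preserved, so that the induction terminates with all labels in $G\setminus K$. I would set this up by a double induction — outer on depth $n$, inner on (say) the lexicographically ordered pair (number of $K$-labels in $b$, sum over $K$-labels of graph-distance in $G$ from the label to the nearest $G\setminus K$ vertex) — and check that the TC condition is precisely what prevents a $K$-triangle relation from ever increasing the count of $K$-labels. Everything else is routine Lie-algebra identity manipulation using the two families of generators of $I_G$ recalled just before the lemma.
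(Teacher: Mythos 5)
Your overall strategy (induction on depth, the Jacobi identity, and the triangle relations generating $I_G$) is the right one, but there is a genuine gap at the heart of the argument: you apply the triangle relation in the wrong direction, and as a result you never extract the one consequence of the TC hypothesis that makes the lemma true. In the depth-one case $[x_1,x_0]$ with $e_0\in K$, $e_1\in G\setminus K$, and $\{e_0,e_1,e_c\}\in\Delta_G$, you use the relation centered at $x_0$ to get $[x_1,x_0]=-[x_c,x_0]$, which trades away the good edge $e_1$ and keeps the $K$-edge $e_0$; you then (correctly) observe that this gets you nowhere and retreat to an unspecified monovariant. The relation you want is the one centered at the generator lying in $G\setminus K$: from $[x_1,x_0+x_c]=0$ one gets $[x_1,x_0]=-[x_1,x_c]=[x_c,x_1]$, which replaces the $K$-edge $e_0$ by $e_c$. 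Now TC, used in contrapositive form, finishes in a single step: if $e_c$ were in $K$, then $e_0$ and $e_c$ would be two edges of the triangle lying in $K$, so TC would force $e_1\in K$, a contradiction; hence $e_c\in G\setminus K$ and both entries of $[x_c,x_1]$ are good. (If $\{e_0,e_1\}$ lies in no triangle the bracket is zero, as you note.) With this observation there are no ``genuinely stuck cases,'' no need for a graph-distance monovariant, and no need for your fallback claim that $b$ ``vanishes outright'' when the rewriting gets stuck --- a claim which is both unsubstantiated and, in fact, not how the proof goes. Your stated ``dichotomy'' (either both of $e_a,e_b$ lie in $K$ or one does not) is a tautology rather than a use of TC; the actual content of TC here is that a triangle containing one $K$-edge and one non-$K$-edge has its third edge outside $K$.

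The second issue is that the inductive step is left as a plan rather than an argument: the well-founded quantity is never defined precisely, and you acknowledge that you have not checked the rewriting moves decrease it. No such bookkeeping is needed. Writing $b=[x_n,B]$, either $B$ already contains an entry in $G\setminus K$, in which case induction rewrites $B$ with all entries in $G\setminus K$ and the Jacobi identity $[x_n,[x_{n-1},B']]=-[B',[x_n,x_{n-1}]]-[x_{n-1},[B',x_n]]$ reduces everything to the depth-one case applied to $[x_n,x_{n-1}]$ (which contains the good entry $x_{n-1}$) together with induction on the shorter bracket $[B',x_n]$; or $B$ has no entry in $G\setminus K$, which forces $x_n\in G\setminus K$ by hypothesis, and the same Jacobi rearrangement applies. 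A complete write-up should also note that the resulting non-left-normed brackets can be re-expanded as linear combinations of left-normed brackets in the same letters (a standard fact about free Lie algebras), a point your proposal elides as well.
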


\begin{proof}
We induct on the depth of the bracket. Consider $[x_1,x_0]$, with
$x_1 \in {G \setminus K}$ and $x_0 \in K$. If $\{e_1,e_0 \}$ is
not the edge of a triangle in $G$, then $[x_1,x_0]=0$ in $\h_G$,
and we can replace $x_0$ with $x_1$. Otherwise, there exists an
edge $e_j$ with $\{e_1,e_0, e_j \} \in \Delta_G$, so that in
$\h_G$ we have $[x_1,x_0+x_j]=0$, hence $[x_1,x_0]=[x_j,x_1]$.
Since $(G,K)$ is TC, the assumption that $e_1 \not\in K$ implies
that $e_j \not\in K$.

Let $B$ denote a bracket of the form
$[x_{n-1},[x_{n-2},[\ldots [x_1,x_0]]]\ldots]$, and consider
$[x_n,B]$. If $B$ has some element in ${G \setminus K}$, then by induction we can
expand $B$ as a sum of brackets of the desired type. So it suffices
in this case to prove the result for $[x_n,[x_{n-1},B']]$ with
all entries of $[x_{n-1},B']$ in ${G \setminus K}$. But
\begin{equation}
\label{eq:jacobi}
[x_n,[x_{n-1},B']]=-[B',[x_n,x_{n-1}]]-[x_{n-1},[B',x_n]],
\end{equation}
and the result holds by induction. If $B$ is has no element in ${G
\setminus K}$, then $x_n \in {G \setminus K}$, and again the
result follows from the Jacobi identity \eqref{eq:jacobi} and
induction.
\end{proof}

\begin{cor}
\label{cor:surj}
If $(G,K)$ is TC, then there are canonical surjections $\pi \colon
\h_G \rightarrow \h_K$ and $I_G \rightarrow I_K$. Furthermore, the
kernel $K(\pi)$ of $\pi$ is the subalgebra of $\h_G$ generated by
$\{ x_i \mid e_i \in {G \setminus K} \}$.
\end{cor}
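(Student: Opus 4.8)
The plan is to establish the three assertions of Corollary~\ref{cor:surj} in sequence, using Lemma~\ref{lem:glue} as the main engine. First I would construct the map $\pi\colon \h_G\to\h_K$. On the level of free Lie algebras there is an obvious map $\L(G)\to\L(K)$ defined by sending $x_i\mapsto x_i$ if $e_i\in K$ and $x_i\mapsto 0$ if $e_i\in G\setminus K$; the key point is that this descends to the quotients. To see this I must check that $I_G$ maps into $I_K$. The generators of $I_G$ are of two types: brackets $[x_1,x_2]$ with $\{e_1,e_2\}$ not in any triangle of $G$, and brackets $[x_i,x_1+x_2+x_3]$ for triangles $\{e_1,e_2,e_3\}\in\Delta_G$. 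If any edge involved lies in $G\setminus K$, the image of the generator is $0$ (or, in the triangle case with exactly one edge inside $K$, a bracket $[x_j,x_k]$ with one slot zero, hence $0$). If all edges involved lie in $K$, then by the TC hypothesis a triangle of $G$ two of whose edges are in $K$ has its third edge in $K$, so such a triangle is already a triangle of $K$ and the generator is literally a generator of $I_K$; likewise a non-triangle pair of $G$ inside $K$ is a non-triangle pair of $K$. Hence $I_G\mapsto I_K$ and $\pi$ is well defined. Surjectivity of $\pi$ and of $I_G\to I_K$ is then immediate since the generators $x_i$, $e_i\in K$, and the generators of $I_K$ are all visibly hit.

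The substantive part is the description of $K(\pi)=\ker\pi$. Let $S\subseteq\h_G$ denote the Lie subalgebra generated by $\{x_i\mid e_i\in G\setminus K\}$. The inclusion $S\subseteq K(\pi)$ is clear, since $\pi$ kills each such generator. For the reverse inclusion I would use that $\h_G$ is spanned, as a $\Q$-vector space, by iterated left-normed brackets $[x_{i_n},[x_{i_{n-1}},[\ldots[x_{i_1},x_{i_0}]]\ldots]]$ together with the degree-one elements $x_i$ (this is just the standard spanning set for a quotient of a free Lie algebra). Decompose such a basic bracket: if every $e_{i_k}$ lies in $K$, the bracket maps to the corresponding nonzero-or-relation element of $\h_K$; if some $e_{i_k}$ lies in $G\setminus K$, then Lemma~\ref{lem:glue} rewrites the bracket as a sum of depth-$n$ left-normed brackets all of whose entries lie in $G\setminus K$, i.e.\ as an element of $S$. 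Thus $\h_G = \widetilde S \oplus (\text{span of brackets entirely in }K)$ where $\widetilde S$ is the span of brackets with all entries in $G\setminus K$; and $\pi$ is injective on the second summand onto its image because those are exactly the basic spanning elements of $\h_K$ — here one uses that no relation of $I_K$ can mix the two kinds of brackets, since $I_K$ is generated by elements supported on edges of $K$. Therefore $\ker\pi$ is contained in $\widetilde S\subseteq S$, giving $K(\pi)=S$.

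The main obstacle I anticipate is the last clause of the previous paragraph: verifying that the decomposition of a spanning set is compatible with $\pi$ in the sense that $\ker\pi$ picks out exactly the $G\setminus K$ part. One has to be careful that a linear combination of basic brackets which is nonzero in $\h_G$ but lies in $\ker\pi$ really does lie in $S$; the subtlety is that $S$ is defined as a subalgebra, not as the span of a distinguished set of brackets, so I must argue that the span of all left-normed brackets with entries in $G\setminus K$ coincides with the subalgebra they generate — which is the standard fact that left-normed brackets span a free (or quotient) Lie algebra, applied to the subalgebra generated by $\{x_i : e_i\in G\setminus K\}$ — and that this span meets the ``$K$-part'' trivially. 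Lemma~\ref{lem:glue} is precisely what guarantees closure of this span under further bracketing by arbitrary $x_j$, so once the bookkeeping of the two-summand decomposition is set up correctly, the argument closes; I would expect the write-up to spend most of its length making that bookkeeping precise rather than on any hard new idea.
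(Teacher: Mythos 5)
Your proposal is correct and follows essentially the same route as the paper: the same edge-killing map $\L(G)\to\L(K)$, Lemma~\ref{lem:glue} to rewrite any kernel element as (something supported on $G\setminus K$) plus (a sum of brackets supported on $K$), and then injectivity of $\pi$ on the $K$-supported part. The only point worth sharpening is that last injectivity claim, which the paper pins down as the identity $I_K = I_G\cap\L(K)$ (a sum of $K$-brackets lifts to $\L(K)$ and dies in $\h_K$ iff the lift lies in $I_K\subseteq I_G$, so it already dies in $\h_G$) --- this, rather than the somewhat vague ``no relation of $I_K$ can mix the two kinds of brackets,'' is the precise fact your argument needs, and it follows from the TC hypothesis exactly as in your first paragraph.
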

\begin{proof}
The map $\mathsf{E}(G) \to \L(K)$, which is the identity on the
edges of $K$ and sends all other edges to zero, extends to a
surjection of free Lie algebras $\L(G) \xrightarrow{\psi} \L(K)$.
Since $K\subset G,$ one can consider $\L(K)$ as a subalgebra of
$\L(G)$, and $\psi$ restricts to the identity on $\L(K)$.  The
induced surjection $\L(G) \rightarrow \h_K$ descends to a
surjection $\pi \colon \h_G \to \h_K$, since the TC property and
Lemma~\ref{lem:glue} guarantee that any generator of $I_G$ either
is sent to zero, or is unchanged and hence  is a generator of
$I_K$. This not only gives the surjectivity of $\psi_{|I_G} \colon
I_G \rightarrow I_K$, but also shows that
\begin{equation}
\label{eq:ideals}
I_G \cap \L(K) \ = \ I_K,
\end{equation}
whenever $(G,K)$ is TC.  Using the snake lemma, one concludes that
$K(\pi)$ is the ideal in $\h_G$ generated by $\langle x_i \mid e_i
\in {G \setminus K} \rangle$. We now proceed to show it is indeed
the subalgebra generated by these elements.
\begin{equation*}
\xymatrix{
 &
I_{G} \ar[d]\ar[r] &
I_{K} \ar[d]  \\
\langle x_i | e_i \in {G \setminus K} \rangle  \ar[r] \ar[d]^{\delta} &
\L(G)  \ar[r]\ar[d] &
\L(K) \ar[d]  \\
K(\pi) \ar[r]  &
\h_{G} \ar[r] & \h_K
}
\end{equation*}
\vspace*{4pt}

By Lemma~\ref{lem:glue} any bracket $\gamma \in K(\pi)$ involving
some $x_i$ with $e_i \in {G \setminus K}$ may be written as a sum
of brackets with all entries coming from ${G \setminus K}$, so it
suffices to show that if $\alpha\neq 0 \in \h_G$  is a sum of
brackets involving only $x_i$'s with $e_i \in K$ then  $\alpha
\not\in K(\pi)$. Indeed, an element $\alpha$ of this form can be
lifted to an element $\widehat{\alpha} \in \L(K) \subset \L(G)$,
and hence $\pi(\alpha) = 0$ if and only if $\widehat{\alpha} \in
I_K = I_G\cap \L(K) \subset I_G.$ This gives $\alpha =0.$
\end{proof}

Let $G_1$ and $G_2$ be two graphs, having a common subgraph $K$.
We construct a new graph $G = G_1 \bigcup_K G_2$ by identifying
$G_1$ and $G_2$ along the image of $K$.

\begin{thm}
\label{thm:ses}
If $G = G_1 \bigcup_K G_2$ and $(G_i,K)$ and $(G,G_i)$ have
property TC for $i \in \{1,2\}$, then:
\begin{description}
\item[a] The diagram
$$
\xymatrix{ \h_G \ar[r]^{\pi_2} \ar[d]_{\pi_1} & \h_{G_2} \ar[d]^{\phi_2}\\
\h_{G_1} \ar[r]_{\phi_1} & \h_K
 }
$$
is a pull-back diagram of Lie algebras.
\item[b] The diagram
$$
\xymatrix{ \h_K \ar[r]^{i_2} \ar[d]_{i_1} & \h_{G_2} \ar[d]^{j_2}\\
\h_{G_1} \ar[r]_{j_1} & \h_G
 }
$$
is a push-out diagram of Lie algebras.
\end{description}
\end{thm}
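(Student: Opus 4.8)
The plan is to establish part (a) first and then deduce part (b) by a categorical duality argument. For part (a), I would verify the universal property of the pull-back directly in the category of Lie algebras: given a Lie algebra $M$ with maps $f_i\colon M\to\h_{G_i}$ satisfying $\phi_1\circ f_1=\phi_2\circ f_2$, I must produce a unique $f\colon M\to\h_G$ with $\pi_i\circ f=f_i$. The key structural input is Corollary~\ref{cor:surj}: since $(G_i,K)$ is TC, the kernel of $\phi_i\colon\h_{G_i}\to\h_K$ is the \emph{subalgebra} of $\h_{G_i}$ generated by $\{x_e\mid e\in G_i\setminus K\}$, and similarly $K(\pi_i)\subseteq\h_G$ is the subalgebra generated by $\{x_e\mid e\in G\setminus G_i\}$. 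Since $G=G_1\cup_K G_2$, we have $G\setminus G_1=G_2\setminus K$ and $G\setminus G_2=G_1\setminus K$ (disjointly), so the generators of $\h_G$ split into three groups: those from $K$, those from $G_1\setminus K$, and those from $G_2\setminus K$. I would define $f$ on generators by $f(x_e)=f_1(x_e)$ if $e\in G_1$ and $f(x_e)=f_2(x_e)$ if $e\in G_2\setminus K$; the compatibility $\phi_1 f_1=\phi_2 f_2$ on $\h_K$ makes this well-defined on the overlap. The real content is checking that this assignment on generators of $\L(G)$ kills $I_G$, i.e. descends to $\h_G$; here I would use the explicit description of the generators of $I_G$ (either $[x_i,x_j]$ for a non-triangle edge pair, or $[x_i,x_j+x_k]$ for a triangle) together with the TC hypothesis $(G,G_i)$: any triangle of $G$ lies entirely in $G_1$ or entirely in $G_2$, so each generator of $I_G$ is a generator of $I_{G_1}$ or of $I_{G_2}$, hence is killed by $f_1$ or $f_2$. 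Uniqueness is immediate since $\h_G$ is generated by the $x_e$.

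The main obstacle is \emph{faithfulness of the pull-back}, i.e. showing the canonical map $\h_G\to\h_{G_1}\times_{\h_K}\h_{G_2}$ is injective. Surjectivity of $\pi_1,\pi_2$ comes from Corollary~\ref{cor:surj}; injectivity amounts to showing $K(\pi_1)\cap K(\pi_2)=0$ in $\h_G$. By Corollary~\ref{cor:surj}, $K(\pi_1)$ is the subalgebra generated by $\{x_e\mid e\in G_2\setminus K\}$ and $K(\pi_2)$ the subalgebra generated by $\{x_e\mid e\in G_1\setminus K\}$. I would argue this intersection is trivial by passing to the free Lie algebra: lift an element $\alpha$ in the intersection to $\widehat\alpha\in\L(G)$, and observe (using the argument pattern of Corollary~\ref{cor:surj}, where a bracket involving a generator outside a TC subgraph can be rewritten to use only generators outside that subgraph) that $\alpha$ can simultaneously be represented by a combination of brackets in $\L(G_2\setminus K)$-generators and by one in $\L(G_1\setminus K)$-generators; in the free Lie algebra $\L(G)$ these two subalgebras meet only in $0$, so the difference of the two lifts lands in $I_G$, and one chases—much as in \eqref{eq:ideals}—to conclude $\alpha=0$ in $\h_G$. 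This faithfulness step, and the bookkeeping that the relations $I_G$ are exactly partitioned between $I_{G_1}$ and $I_{G_2}$, is where all the hypotheses (both $(G_i,K)$ TC and $(G,G_i)$ TC) get used, and is the part I expect to require the most care.

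For part (b), I would deduce the push-out statement formally. The maps $i_k\colon\h_K\to\h_{G_k}$ are the natural inclusions of subalgebras of generators (these are split injective by the retractions $\phi_k$, by Corollary~\ref{cor:surj} and \eqref{eq:ideals}), and $j_k\colon\h_{G_k}\to\h_G$ are the evident inclusions. To verify the universal property, given a Lie algebra $N$ with $g_k\colon\h_{G_k}\to N$ and $g_1 i_1=g_2 i_2$, I must build a unique $g\colon\h_G\to N$ with $g j_k=g_k$. On generators set $g(x_e)=g_1(x_e)$ for $e\in G_1$ and $g(x_e)=g_2(x_e)$ for $e\in G_2\setminus K$; compatibility on $\h_K$ makes this consistent, and the relation-partition fact from part (a)—each generator of $I_G$ is a generator of $I_{G_1}$ or of $I_{G_2}$—shows $g$ descends to $\h_G$. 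Uniqueness again follows from $\h_G$ being generated by the $x_e$. Thus once the generator/relation decomposition underlying part (a) is in hand, part (b) is essentially a restatement, and no new ideas are needed.
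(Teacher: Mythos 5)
There are two concrete gaps. First, in part (a) your verification of the universal property is set up backwards: you propose to define the comparison map $f\colon M\to\h_G$ ``on generators $x_e$,'' but the $x_e$ generate the \emph{target} $\h_G$, not the arbitrary source $M$, so the recipe $f(x_e)=f_1(x_e)$ is meaningless there --- it is the recipe for a map \emph{out of} $\h_G$, i.e.\ for the push-out, not the pull-back. What actually has to be shown is that the canonical map $\h_G\to\h_{G_1}\times_{\h_K}\h_{G_2}$ is bijective. Your injectivity argument ($K(\pi_1)\cap K(\pi_2)=0$, by lifting to $\L(G)$ and using \eqref{eq:ideals}) is essentially the paper's. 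But surjectivity onto the fiber product is never addressed: surjectivity of $\pi_1$ and $\pi_2$ separately does not produce, for a compatible pair $(\alpha_1,\alpha_2)$ with $\phi_1(\alpha_1)=\phi_2(\alpha_2)$, a single common preimage in $\h_G$. The paper gets this from Lemma~\ref{lem:glue}: write $\alpha_i=g_i+k_i$ with $g_i$ a sum of brackets in the edges of $G_i\setminus K$ and $k_i$ in the edges of $K$; compatibility forces $k_1=k_2$, and then $g_1+g_2+k_1$, read in $\h_G$, maps to $(\alpha_1,\alpha_2)$. You need this step.

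Second, the ``relation-partition fact'' on which both halves of your argument lean --- that every generator of $I_G$ is a generator of $I_{G_1}$ or of $I_{G_2}$ --- is false. If $e_i\in G_1\setminus K$ and $e_j\in G_2\setminus K$, then by the TC hypotheses no triangle of $G$ contains both (a triangle with two edges in $G_r$ lies entirely in $G_r$, which would force one of $e_i,e_j$ into $K$), so $[x_i,x_j]$ is a generator of $I_G$; but it is a generator of neither $I_{G_1}$ nor $I_{G_2}$, since those ideals live in $\L(G_1)$ and $\L(G_2)$ respectively and involve only edges of the corresponding subgraph. Consequently, in part (b) your proposed map $g$ need not descend to $\h_G$: for a general target $N$ there is no reason that $[g_1(x_i),g_2(x_j)]=0$. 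This is not a technicality; it is precisely the delicate point in the push-out claim, since the amalgamated coproduct of Lie algebras does not impose these cross-commutators. The paper takes a different route altogether: it proves that the sequences $0\to\h_G\to\h_{G_1}\times\h_{G_2}\to\h_K\to 0$ and $0\to\h_K\to\h_{G_1}\times\h_{G_2}\to\h_G\to 0$ are exact as graded vector spaces, which is what Corollary~\ref{cor:poly} actually requires.
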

\begin{proof}
To prove assertion {\textbf a}, it suffices to show that
\begin{equation}
\label{eq:exact}
 0 \longrightarrow \h_G \xrightarrow{\pi_1 \times \pi_2}
\h_{G_1}\times \h_{G_2}
 \xrightarrow{\phi_1-\phi_2} \h_{K}  \longrightarrow 0.
\end{equation}
is an exact sequence of vector spaces. 
It follows from Corollary \ref{cor:surj} that $\phi_1 - \phi_2$ is
surjective. On the other hand, Lemma \ref{lem:glue} shows that any element
$\alpha_i \in \h_{G_i}$, $i=1,2$, can be written as $\alpha_i = g_i + k_i$,
where $g_i$ (respectively $k_i$) is a sum of brackets involving only generators
associated to edges in $G_i - K$ (respectively $K$). In particular, if
$(\alpha_1,\alpha_2) \in \operatorname{Ker}(\phi_1-\phi_2)$ then $k_1=k_2$
and one immediately sees that $(\alpha_1,\alpha_2)$ is in the image of
$\pi_1\times \pi_2$. The exactness of the sequence in the middle is now
clear. Consider the commutative diagram:

\begin{equation*}
\xymatrix{
 &
K(\pi_1) \ar[d]\ar[r] &
K(\phi_2) \ar[d]  \\
K(\pi_2) \ar[r]\ar[d] & \h_G  \ar[r]^{\pi_2}\ar[d]^{\pi_1} &
\h_{G_2} \ar[d]^{\phi_2}  \\
K(\phi_1) \ar[r] &
\h_{G_1} \ar[r]^{\phi_1} &
\h_K
}
\end{equation*}
\vspace*{4pt}

Since both $(G_i,K)$ and $(G,G_i)$ are TC, it follows from
Lemma~\ref{lem:glue} that any bracket in $K(\phi_2)$ may be
written as a bracket involving only the edges in $G_2 \setminus K
= G \setminus G_1$, and any bracket in $K(\phi_1)$ may be written
as a bracket involving only the edges in $G_1 \setminus K = G
\setminus G_2$. Thus, the maps $K(\pi_1) \rightarrow K(\phi_2)$
and $K(\pi_2) \rightarrow K(\phi_1)$ are surjective. 

By Corollary~\ref{cor:surj}, $K(\pi_i)$ is the subalgebra of $\h_{G}$
generated by the variables associated to edges in $G \setminus
G_i$, $i=1, 2$ and, similarly, $K(\phi_i)$ is the subalgebra of
$\h_{G}$ generated by the variables associated to edges in $G_i
\setminus K$, $i=1, 2.$

It follows that the projection $\L(G) \to \h_G$ induces an
isomorphism
$$\L(G\setminus G_1)/ I_G\cap \L(G\setminus G_1) \simeq
K(\pi_1).
$$
The surjection $\L(G) \to \L(G_2)$ restricts to an isomorphism
\[
\L(G\setminus G_1) \xrightarrow{\simeq} \L(G_2\setminus K)
\]
and under this isomorphism one has
\[
\begin{array}{ccc}
K(\pi_1) & \simeq &\L(G\setminus G_1)/ I_G\cap \L(G\setminus G_1)\\
        &  \simeq &\L(G_2\setminus K)/ I_G \cap\L(G_2)\cap \L(G_2\setminus K) \\
         & = & \L(G_2\setminus K)/I_{G_2}\cap \L(G_2\setminus K) \\
         &\simeq& K(\phi_2),
\end{array}
\]
where the equality comes from \eqref{eq:ideals}, since $(G,G_2)$
is TC. It follows that the upper horizontal arrow in the diagram
is an isomorphism, and so is the leftmost vertical arrow. 
In particular $K(\pi_1)\cap K(\pi_2) = 0$, thus showing that the
kernel of $\pi_1\times \pi_2$ is trivial, and this concludes the proof that
the sequence \eqref{eq:exact} is exact, and $({\bf a})$ follows.

To prove assertion {\bf b}, first observe that \eqref{eq:ideals}
implies that the natural maps $i_r \colon \h_K \to \h_{G_r}$ and
$j_r \colon \h_{G_r}\to \h_G$ are injections, for $r=1,2$. Now,
consider the sequence
\[ 0
\longrightarrow \h_K \xrightarrow{i_1 \times i_2} \h_{G_1}\times
\h_{G_2}
 \xrightarrow{j_1-j_2} \h_{G}  \longrightarrow 0.
\]
A similar argument to the one used in previous assertion shows
that this sequence is exact and that the diagram is in fact a
push-out of Lie algebras.
\end{proof}
\begin{cor}
\label{cor:poly}
With assumptions above,
\[
U_G(t) = \frac{U_{G_1}(t) \cdot U_{G_2}(t)}{U_K(t)} .
\]
\end{cor}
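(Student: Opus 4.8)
The plan is to deduce Corollary~\ref{cor:poly} directly from part~(\textbf{b}) of Theorem~\ref{thm:ses}, using the Poincar\'e--Birkhoff--Witt theorem to pass from Lie algebras to their universal enveloping algebras and then to Hilbert series. Recall from the end of Section~\ref{sec:intro} that $U_G(t) = \prod_{k\ge 1}(1-t^k)^{\phi_k}$ is the reciprocal of the Hilbert series of $U(\h_G)$, the universal enveloping algebra of the holonomy Lie algebra, graded by bracket depth. So writing $\operatorname{Hilb}(A,t)$ for the Hilbert series of a graded algebra $A$, the claim $U_G(t) = U_{G_1}(t)U_{G_2}(t)/U_K(t)$ is equivalent to the multiplicative identity
\[
\operatorname{Hilb}(U(\h_G),t)\cdot \operatorname{Hilb}(U(\h_K),t) = \operatorname{Hilb}(U(\h_{G_1}),t)\cdot\operatorname{Hilb}(U(\h_{G_2}),t).
\]

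First I would record that the short exact sequence of graded Lie algebras
\[
0 \longrightarrow \h_K \xrightarrow{i_1\times i_2} \h_{G_1}\times\h_{G_2} \xrightarrow{j_1-j_2} \h_G \longrightarrow 0
\]
from the proof of assertion~(\textbf{b}) expresses $\h_G$ as the quotient of the product $\h_{G_1}\times\h_{G_2}$ by the diagonally embedded copy of $\h_K$; equivalently, $\h_G$ is the amalgamated sum (push-out) $\h_{G_1}\amalg_{\h_K}\h_{G_2}$ in the category of Lie algebras. Next I would apply the universal enveloping algebra functor. Since $U(-)$ is a left adjoint it preserves push-outs, so $U(\h_G) \cong U(\h_{G_1})\otimes_{U(\h_K)}U(\h_{G_2})$, where the tensor product is taken over the common subalgebra $U(\h_K)$, which sits inside $U(\h_{G_r})$ as a subalgebra because $i_r$ is an injection of Lie algebras (established in the proof of (\textbf{b})) and $U$ takes Lie subalgebras to subalgebras by PBW.

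The key quantitative input is that this tensor product is "free" in the appropriate graded sense. By PBW applied to the Lie subalgebra $\h_K \hookrightarrow \h_{G_r}$, one can choose a homogeneous basis of $\h_{G_r}$ extending a homogeneous basis of $\h_K$; ordering the monomials so that the $\h_K$-part comes last shows that $U(\h_{G_r})$ is free as a right $U(\h_K)$-module, with a homogeneous basis indexed by PBW monomials in the "complementary" basis elements (those coming from $\h_{G_r}\setminus\h_K$, which by Corollary~\ref{cor:surj} are exactly the generators attached to edges in $G_r\setminus K$). Hence $\operatorname{Hilb}(U(\h_{G_r}),t) = \operatorname{Hilb}(U(\h_K),t)\cdot Q_r(t)$ where $Q_r(t)$ is the Hilbert series of that free module basis. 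Tensoring over $U(\h_K)$ then gives $\operatorname{Hilb}(U(\h_G),t) = \operatorname{Hilb}(U(\h_K),t)\cdot Q_1(t)\cdot Q_2(t)$, and substituting $Q_r(t) = \operatorname{Hilb}(U(\h_{G_r}),t)/\operatorname{Hilb}(U(\h_K),t)$ yields the displayed identity, hence the corollary.

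I expect the main obstacle to be the bookkeeping needed to make the "freeness over $U(\h_K)$" step rigorous at the level of graded vector spaces: one must check that the SES of Lie algebras genuinely splits $\h_{G_r}$ as $\h_K$ plus a homogeneous complement compatible with the bracket-depth grading (this is where the push-out structure and the explicit identification of $K(\pi_r)$ with $\L(G_r\setminus K)/(I_{G_r}\cap\L(G_r\setminus K))$ from Theorem~\ref{thm:ses} are used), and that the resulting PBW-monomial basis of $U(\h_{G_r})$ as a $U(\h_K)$-module is homogeneous so that Hilbert series genuinely multiply. Once that is in place the rest is formal. An alternative, perhaps cleaner, route avoiding enveloping algebras entirely is to use the exact sequence of vector spaces~\eqref{eq:exact} from part~(\textbf{a}) together with the observation that $\phi_k(G) + \phi_k(K) = \phi_k(G_1) + \phi_k(G_2)$ for every $k$; this gives the exponent identity directly, and $U_G(t) = \prod_k (1-t^k)^{\phi_k(G)}$ then immediately yields $U_G(t)\,U_K(t) = U_{G_1}(t)\,U_{G_2}(t)$. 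I would present the PBW/push-out argument as the main proof since it explains \emph{why} the identity holds, and mention the dimension-count via~\eqref{eq:exact} as a remark.
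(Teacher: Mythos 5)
Your fallback observation at the end --- that the exact sequence of graded vector spaces \eqref{eq:exact} gives $\phi_k(G)+\phi_k(K)=\phi_k(G_1)+\phi_k(G_2)$ for every $k$, whence $U_G(t)\,U_K(t)=U_{G_1}(t)\,U_{G_2}(t)$ directly from the definition $U_G(t)=\prod_k(1-t^k)^{\phi_k}$ --- is the entire proof, and is what the paper (which states the corollary without argument) is relying on. You do not even need PBW for this; only the fact that $\phi_k=\dim(\h_G)_k$ and that the maps in \eqref{eq:exact} preserve the bracket-depth grading. You should present that as the proof, not relegate it to a remark.

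The argument you propose as the \emph{main} proof has a genuine error. It is true that $U(-)$ is a left adjoint and so preserves push-outs, but the push-out in the category of associative algebras is the \emph{amalgamated free product} $U(\h_{G_1})*_{U(\h_K)}U(\h_{G_2})$, not the tensor product $U(\h_{G_1})\otimes_{U(\h_K)}U(\h_{G_2})$ (which is not even naturally an algebra when $U(\h_K)$ is noncentral). These have very different Hilbert series: for an amalgamated free product with both factors free over the common subalgebra one gets the additive-reciprocal identity $h_{A*_CB}^{-1}=h_A^{-1}+h_B^{-1}-h_C^{-1}$, not the multiplicative identity $h_{A}h_{B}=h_{A*_CB}\,h_C$ that you need. (Test case $K=\emptyset$: the Lie-algebra coproduct is the free product, whose enveloping algebra is the free product of algebras, whereas your formula would give the tensor product, i.e.\ the enveloping algebra of the \emph{direct} product.) To salvage the tensor-product decomposition you would have to prove that $\h_G$ is, as a graded vector space, $\h_{G_1}\oplus_{\h_K}\h_{G_2}$ --- but that is exactly the content of the exact sequence \eqref{eq:exact}, so the detour through enveloping algebras adds nothing and, as written, is incorrect. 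Drop it and keep the dimension count.
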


\section{Main Theorem}
\label{sec:graphic}
To prove the main theorem, we induct on the number of vertices
$v(G)$ of $G$, the basic case being trivial. If $G$ is a graph on
$\{v_1,\ldots,v_n \}$, then let $G_1$ denote the subgraph of $G$
consisting of all edges of $G$ not containing $v_n$, and $G_2'$
denote the subgraph of $G$ consisting of all edges of $G$
containing $v_n$. Note that $v(G_1)= n-1$.
\begin{lem}\label{lem:induct}
Let $K$ consist of all edges in $G_1$ which connect vertices
$\{a_1,a_2 \}$ such that $ \{ a_1,v_n \}, \{ a_2,v_n \} \in G_2'$, and
define $G_2 = G_2' \cup K$. Then $(G_i,K)$ and $(G,G_i)$ are TC.
\end{lem}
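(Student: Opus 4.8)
The plan is to verify the two triangle-completeness conditions directly from Definition~\ref{defn:star}, by tracing through what the triangles of $G$ look like relative to the distinguished vertex $v_n$. The key structural observation is that every triangle of $G$ falls into exactly one of two families: triangles all of whose vertices lie in $\{v_1,\dots,v_{n-1}\}$ (these are triangles of $G_1$), and triangles having $v_n$ as a vertex. A triangle of the second type consists of $v_n$ together with an edge $\{a_1,a_2\}$ of $G_1$ such that both $\{a_1,v_n\}$ and $\{a_2,v_n\}$ are edges of $G_2'$; by the definition of $K$, the edge $\{a_1,a_2\}$ then lies in $K$, so in fact every triangle through $v_n$ has all three edges in $G_2$. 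This is the single fact that drives all four TC verifications.

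First I would check $(G_1,K)$ is TC. Take a triangle $T$ of $G_1$ two of whose edges lie in $K$; I must show the third edge is in $K$. Since $T \subseteq G_1$, it does not contain $v_n$, so write its vertices as $\{a,b,c\}$. Two of the three edges, say $\{a,b\}$ and $\{b,c\}$, lie in $K$, which by definition means $\{a,v_n\},\{b,v_n\},\{c,v_n\}$ are all in $G_2'$; but then the pair $\{a,v_n\},\{c,v_n\}$ witnesses that the remaining edge $\{a,c\}$ satisfies the membership criterion for $K$, so $\{a,c\}\in K$. Next, $(G_2,K)$ is TC: a triangle of $G_2$ with two edges in $K$ is either a triangle of $G_1$ (handled exactly as above) or a triangle through $v_n$; but a triangle through $v_n$ has only one edge — the ``base'' edge opposite $v_n$ — that can possibly lie in $K$, since the other two edges contain $v_n$ and $K \subseteq G_1$ contains no such edge, so the hypothesis ``two edges in $K$'' is vacuous in that case.

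It remains to check $(G,G_1)$ and $(G,G_2)$ are TC. For $(G,G_1)$: let $T$ be a triangle of $G$ with two edges in $G_1$. If $T$ avoids $v_n$ then $T\subseteq G_1$ and the conclusion is immediate. If $T$ contains $v_n$, then two of its edges contain $v_n$ and hence cannot lie in $G_1$, so again the hypothesis is vacuous. For $(G,G_2)$: let $T$ be a triangle of $G$ with two edges in $G_2$. If $T$ contains $v_n$ then, as noted in the structural observation, all three of its edges lie in $G_2$, so we are done. If $T$ avoids $v_n$, then $T \subseteq G_1$; two of its edges lie in $G_2\cap G_1$. Here is the one place requiring a small argument: I claim $G_2 \cap G_1 = K$. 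Indeed an edge of $G_2 = G_2'\cup K$ lying in $G_1$ is either in $K\subseteq G_1$, or in $G_2'\cap G_1$ — but every edge of $G_2'$ contains $v_n$, so $G_2'\cap G_1 = \varnothing$. Thus two edges of $T$ lie in $K$, and by the already-established fact that $(G_1,K)$ is TC, the third edge lies in $K\subseteq G_2$, completing the verification.

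The main obstacle — really the only subtlety — is keeping straight the asymmetry between $K$ (which lives inside $G_1$, hence contains no edge at $v_n$) and the triangles through $v_n$ (two of whose edges are at $v_n$). Once one records that $K$-edges never touch $v_n$, three of the four cases collapse to ``the hypothesis is vacuous'' or ``the triangle lies in $G_1$,'' and the last case reduces to the identity $G_1\cap G_2 = K$ plus the first case. No induction or appeal to the holonomy Lie algebra is needed; this lemma is purely a combinatorial bookkeeping statement about triangles, and it is exactly the input the gluing machinery of Section~2 (Corollary~\ref{cor:poly}) needs to run the inductive step in the proof of Theorem~\ref{thm:gLCS}.
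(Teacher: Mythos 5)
Your proof is correct and follows essentially the same route as the paper: a direct case analysis on whether the triangle contains $v_n$, driven by the observation that the base edge of any triangle through $v_n$ lies in $K$ by construction. The only cosmetic difference is that the paper verifies the slightly stronger statement that $(G,K)$ is TC and deduces $(G_i,K)$ from it, whereas you check $(G_1,K)$ and $(G_2,K)$ separately; the content is identical.
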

\begin{proof}
If a triangle with vertices $\{a_1,a_2,a_3\}$ of $G$ has two edges
$\{a_1,a_2\},\{a_1,a_3\}$
in $K$, then $\{ a_i,v_n \} \in G_2'$ and hence $\{a_2,a_3\} \in K$.
So $(G,K)$ is TC, which implies $(G_i,K)$ is TC. Clearly $(G,G_1)$ is
TC, since any triangle of $G$ with two edges in $G_1$ does not contain
the vertex $v_n$, so has the third edge in $G_1$. Finally, if a
triangle of $G$ has two edges $\{a_1,a_2\},\{a_1,a_3\}$ in $G_2$,
then there are three possibilities.
\begin{enumerate}
\item If $v_n = a_1$, then $\{a_2,a_3 \} \in K \subseteq G_2$.
\item If $v_n = a_2$, then $\{a_1,a_3 \} \in K$, hence $\{v_n,a_3 \}\in G_2$.
\item If $v_n \not \in \{a_1,a_2,a_3\}$, then $\{v_n,a_2 \}, \{v_n,a_3 \} \in G_2$,
  so  $\{a_2,a_3 \} \in K \subseteq G_2$. 
\end{enumerate}\end{proof}

If $G \simeq K_n$  then the formula holds by \cite{K85} (this can
also be proved directly, using \cite{FR85}, \cite{stan} and \cite{T}), 
so we may assume $G \not\simeq K_n$. Using the notation of
Lemma~\ref{lem:induct} choose $v_n$ so that $v(G_2) = v(G_2') <
v(G)$. By induction, Lemma~\ref{lem:induct} and
Corollary~\ref{cor:poly}, we find that $U_G(t) = $ \vspace*{4pt}
\begin{small}
\[
\frac{
\prod_{j=1}^{v(G_1)-1} \left(1-jt\right)^{\DS{\sum_{s=j}^{v(G_1)-1}(-1)^{s-j}
\tbinom{s}{j} \kappa_{s}(G_1)}} \cdot
\prod_{j=1}^{v(G_2)-1} \left(1-jt\right)^{\DS{\sum_{s=j}^{v(G_2)-1}(-1)^{s-j}
\tbinom{s}{j} \kappa_{s}(G_2)}}
}
{
\prod_{j=1}^{v(K)-1} \left(1-jt\right)^{\DS{\sum_{s=j}^{v(K)-1}(-1)^{s-j}
\tbinom{s}{j} \kappa_{s}(K)}}
}
\]
\end{small}
\vspace*{4pt}

\noindent Observing that for any graph $H,$ one has
$\kappa_s(H)=0$ whenever $s\geq v(H)$, we can write this formula
as
\[
\prod_{j\geq 1}\ \left(1-jt\right)^{ \left[ \DS{\sum_{s\geq
j}(-1)^{s-j} \tbinom{s}{j} \left\{ \kappa_{s}(G_1) +
\kappa_s(G_2)- \kappa_s(K) \right\} } \right] } .
\]
Partition the complete subgraphs of $G$ into two sets: those
which contain $v_n$, and those which do not. A complete
subgraph lying in $G_2$ and not containing $v_n$ corresponds to a
complete subgraph lying in $K$, and so is counted in both
$G_1$ and $G_2$. This concludes the proof of $(\ref{eq:GLCS})$.
\begin{remark}
\label{rmk:decomp}
The ideas here may be extended to the class of Lie algebras where 
an analog of Theorem~\ref{thm:ses} holds, and will appear elsewhere shortly.
\end{remark}

\bibliographystyle{amsalpha}

\end{document}